\title{Rational approximation for Hitchin representations}
\author{Jacques Audibert}
\author{Michael Zshornack}
\address{Max Planck Institute for Mathematics, Vivatsgasse 7, 53111 Bonn, Germany.}
\email{\href{mailto:audibert.j@outlook.fr}{audibert.j@outlook.fr}}
\address{Department of Mathematics, University of California Santa Barbara, Santa Barbara, CA 93106, USA.}
\email{\href{mailto:zshornack@math.ucsb.edu}{zshornack@math.ucsb.edu}}
\newtheorem{theorem}{Theorem}[section]
\newtheorem{corollary}[theorem]{Corollary}
\newtheorem{lemma}[theorem]{Lemma}
\newtheorem{question}{Question}
\theoremstyle{remark}
\newtheorem*{remark}{Remark}
\theoremstyle{definition}
\newtheorem{definition}[theorem]{Definition}
\DeclareMathOperator{\SL}{SL}
\DeclareMathOperator{\Sp}{Sp}
\DeclareMathOperator{\SO}{SO}
\DeclareMathOperator{\Hom}{Hom}
\DeclareMathOperator{\Hit}{\mathcal{H}}
\DeclareMathOperator{\Tr}{Tr}
\newcommand{\C}{\mathbf{C}}
\newcommand{\R}{\mathbf{R}}
\newcommand{\Z}{\mathbf{Z}}
\newcommand{\Q}{\mathbf{Q}}
\begin{document}

\begin{abstract}
    A consequence of Rapinchuk et al. is that for $S$ a closed surface of genus $g\geq 2$, the set of Hitchin representations of $\pi_1(S)$ with image in $\SL(n,\Q)$ is dense in the Hitchin component. We give a dynamical proof of this fact provided that $g\geq 3$. Moreover, we extend it to some other $\Q$-groups such as $\Sp(2k,\Q)$ and $\mathrm{G}_2(\Q)$, where the results are new.
\end{abstract}

\maketitle

\section{Introduction}

Let $S$ be a closed, orientable surface of genus at least $2$ and let 
\[
\Hit_n(S)\subset\Hom(\pi_1(S),\SL(n,\R))/\SL(n,\R)
\]
denote the Hitchin component of $S$.\footnote{For $n$ even, there are two such components. We pick one of them.} It is a connected component that consists only of discrete and faithful representations. Representations in these components have many interesting dynamical and geometric properties, the study of which comprises a very active field of modern research (e.g. see \cite{canary2023hitchin}). Their arithmetic properties also provide a powerful tool in the study of lattices in higher rank. See for instance \cite{long2011zariski} and \cite{LT} where the authors use Hitchin representations to construct Zariski-dense surface subgroups of $\SL(n,\Z)$ for every odd $n$.

More generally, the underlying rational structure of the Hitchin component provides means of understanding surface subgroups of $\SL(n,\Q)$. Such subgroups also have a number of interesting properties, for one, they satisfy analogs of strong approximation \cite{matthewsetal}. In addition, while not necessarily contained in lattices of $\SL(n,\R)$, these subgroups are contained in lattices of products of Lie groups and $p$-adic Lie groups. As such, they admit actions on spaces constructed from the symmetric spaces and Bruhat--Tits buildings associated to the groups in this product. Numerous open questions surround the properties of such actions, see \cite{fisheretal} or \cite{brodyetal}.

In this note we investigate one aspect of the Hitchin component's rational structure. Let $\Hit_n(S)_{\Q}$ denote the conjugacy classes of representations in $\Hit_n(S)$ whose image is conjugate to a subgroup of $\SL(n,\Q)$. Representations in $\Hit_n(S)_{\Q}$ provide surface subgroups of $\SL(n,\Q)$ and our main result is that they are abundant, in the sense of the following theorem.

\begin{theorem}
\label{maintheorem}
When the genus of $S$ is at least $3$, $\Hit_n(S)_{\Q}$ is dense in $\Hit_n(S)$. 
\end{theorem}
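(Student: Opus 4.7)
The plan is to exploit an amalgamated product decomposition of $\pi_1(S)$. Since $g \geq 3$, one can pick a separating simple closed curve $\gamma$ on $S$ splitting it into subsurfaces $S_1, S_2$ of genera $g_1 + g_2 = g$ with $g_1, g_2 \geq 1$, arranged so that $g_1 \geq 2$; this is the one place where $g \geq 3$ is used. Correspondingly $\pi_1(S) = \pi_1(S_1) *_{\langle \gamma \rangle} \pi_1(S_2)$, with each $\pi_1(S_i)$ free of rank $2g_i$.

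Given $\rho \in \Hit_n(S)$, a representation of this amalgam into $\SL(n,\Q)$ is exactly a pair $(\rho_1', \rho_2')$ of rational representations of the $\pi_1(S_i)$ that agree on $\gamma$. The plan is therefore: first, using density of $\SL(n,\Q)$ in $\SL(n,\R)$, pick a rational $M \in \SL(n,\Q)$ close to $\rho(\gamma)$; next, for each $i$, find a rational representation $\rho_i'$ close to $\rho|_{\pi_1(S_i)}$ with $\rho_i'(\gamma) = M$; finally, glue them to obtain a rational $\rho' : \pi_1(S) \to \SL(n,\Q)$ close to $\rho$. Openness of the Hitchin component then places $\rho'$ in $\Hit_n(S)_\Q$ near $\rho$.

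The technical heart is the following approximation lemma for free groups: for $\phi : F \to \SL(n,\R)$ with sufficiently rich image (e.g.\ Zariski-dense) on a free group $F$ of rank $\geq 2$, and a non-trivial $\gamma \in F$, any rational $M$ close to $\phi(\gamma)$ arises as $\phi'(\gamma)$ for some $\phi' : F \to \SL(n,\Q)$ close to $\phi$. The dynamical input is that the evaluation map $\Hom(F,\SL(n,\R)) \to \SL(n,\R)$, $\phi \mapsto \phi(\gamma)$, should be a submersion at Zariski-dense $\phi$, so its fiber over $M$ is a smooth $\Q$-subvariety. The idea is to approximate $\phi$ rationally on all but two of the $2g_i$ generators, reducing to a single commutator equation $[X,Y] = D$ with $D \in \SL(n,\Q)$; it is here that the rank $\geq 4$ of $\pi_1(S_1)$ (i.e.\ $g_1 \geq 2$) is essential, since one needs at least two generators to remain free after the reduction.

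The hard part will be producing a rational point in the fiber of the commutator map $\SL(n)^2 \to \SL(n)$ above $D$, close to a given real solution $(\phi(x), \phi(y))$. The fiber is smooth and defined over $\Q$ at such a Zariski-dense configuration, but density of $\Q$-points in $\R$-points is a priori not automatic. A feasible tactic is to further fix a rational $X$ close to $\phi(x)$, reducing $[X, Y] = D$ to the linear equation $X Y X^{-1} = D Y$ in $Y$ with solution space defined over $\Q$; the dynamical input --- regular semisimplicity of $\rho(\gamma)$ and Zariski density of the restrictions $\rho|_{\pi_1(S_i)}$ --- should guide the choice of $X$ and $D$ so that this linear system has nontrivial solutions meeting $\SL(n)$ near $\phi(y)$, hence rational solutions by $\Q$-linear density. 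Once the key lemma is in hand, gluing together with openness of $\Hit_n(S)$ closes out the argument, and the same dynamical skeleton should carry over to $\Sp(2k)$ and $\mathrm{G}_2$, accounting for the generalizations announced in the abstract.
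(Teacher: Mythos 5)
Your approach is genuinely different from the paper's: you propose cutting $S$ along a \emph{separating} curve and working with the amalgamated product decomposition, whereas the paper cuts along \emph{nonseparating} curves and works with twist flows in the HNN picture. The paper's strategy is to observe that a twist deformation about a nonseparating $\gamma$ is controlled by a single matrix lying in the centralizer $Z_{\rho(\gamma)}$, which for a Hitchin representation (where $\rho(\gamma)$ is regular semisimple) is a connected torus defined over $\Q$; weak approximation on tori (Theorem 7.7 of Platonov--Rapinchuk) then produces rational approximating matrices, and the trace-rigidity theorem of Bridgeman--Canary--Labourie (this is where $g\geq 3$ is used) guarantees that finitely many such twists connect any Hitchin representation to a rational base point. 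Your proposal replaces this with a gluing argument and, at the technical heart, the problem of approximating a real solution of $[X,Y]=D$ by a rational one.

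The gap in your argument is exactly at that technical heart, and I don't see how to repair it as written. Fixing a rational $X$ near $\phi(x)$ and a rational $D$ near $\rho(\gamma)$ and then solving the linear equation $XYX^{-1}=DY$ in $Y$ only works if that linear system has a nontrivial solution space; but the kernel of $Y\mapsto XYX^{-1}-DY$ on $M_n$ is nonzero precisely when $X$ and $D^{-1}X$ share an eigenvalue, and is large enough to meet $\SL(n)$ essentially only when they have the \emph{same} characteristic polynomial. The real pair $(\phi(x),\rho(\gamma))$ does satisfy this constraint (since $\rho(\gamma)^{-1}\phi(x)=\phi(y)\phi(x)\phi(y)^{-1}$), but it is a codimension-$(n-1)$ condition on $(X,D)$, so independently chosen rational approximations will generically violate it and the linear system will have no solution at all, let alone one near $\phi(y)$ in $\SL(n)$. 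You would need to produce rational $(X,D)$ close to $(\phi(x),\rho(\gamma))$ lying on the subvariety $\{X\sim D^{-1}X\}$ --- but establishing density of $\Q$-points on that variety near a given real point is essentially the same difficulty you started with; "$\Q$-linear density" does not apply because the constraint is not linear. This is precisely the issue the paper's twist-flow strategy is designed to avoid.

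Two further points. First, your accounting of where $g\geq 3$ enters is not right: with a separating curve on a genus-$3$ surface one of the pieces must have genus $1$, and then $\pi_1(S_2)$ is free of rank $2$ with $\gamma=[c,d]$ the full commutator --- there are no extra generators to absorb rationally, so the reduction step you describe is unavailable for that piece and you face the bare commutator problem anyway. (In the paper, $g\geq 3$ is used for an entirely different reason: the BCL result on nonseparating simple closed curve trace functions spanning the cotangent space requires it.) Second, even if you could glue, you would also need to verify that the restriction of a Hitchin representation to each $\pi_1(S_i)$ is Zariski-dense so that the evaluation map is a submersion; this is plausible but should be justified.
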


Theorem \ref{maintheorem} is also a consequence of the $\Q$-rationality of these varieties, which was originally proven by \cite{Rapinchuk_Representation}. Nonetheless, our methods recover this fact using different techniques. Moreover, generalizations to representations into other Lie groups not considered in their original work are discussed in \S \ref{section:generalizations}.

From now on, we make the standing assumption that the genus of $S$ is at least $3$. Some discussion on what is missing in the genus $2$ case, as well as a discussion of other generalizations of Theorem \ref{maintheorem}, is left to Section \ref{section:generalizations}.

When $n=2$, $\Hit_2(S)$ is the Teichm\"uller space of $S$ and Theorem \ref{maintheorem} recovers a particular case of a classical result of Takeuchi \cite{takeuchi}. In this case, the images of the representations studied are always cocompact lattices in $\SL(2,\R)$, whereas they are always of infinite covolume when $n>2$, thus our proof uses different methods.

While Theorem \ref{maintheorem} concerns conjugacy classes of representations, the same statement holds at the level of individual representations. Let 
\[
\widetilde{\mathcal{H}}_n(S)\subset\Hom(\pi_1(S),\SL(n,\R))
\]
be the connected component of representations corresponding to the Hitchin component. If $\widetilde{\mathcal{H}}_n(S)_\Q$ denotes the set of representations whose image is contained in $\SL(n,\Q)$, then Theorem \ref{maintheorem}, along with the well-known fact that $\SL(n,\Q)$ is dense in $\SL(n,\R)$, gives rise to the following immediate corollary.

\begin{corollary}
\label{maincorollary}
$\widetilde{\mathcal{H}}_n(S)_\Q$ is dense in $\widetilde{\mathcal{H}}_n(S)$.
\end{corollary}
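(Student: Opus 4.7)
The plan is to lift the density of conjugacy classes provided by Theorem \ref{maintheorem} to density of actual representations, using that the $\SL(n,\R)$-action is nice enough on the Hitchin component and that $\SL(n,\Q)$ is dense in $\SL(n,\R)$. Fix $\rho\in\widetilde{\mathcal{H}}_n(S)$ and a finite generating set $\gamma_1,\dots,\gamma_r$ of $\pi_1(S)$; convergence in $\Hom(\pi_1(S),\SL(n,\R))$ is just entrywise convergence of the matrices $\rho(\gamma_i)$.

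First, by Theorem \ref{maintheorem}, choose a sequence $[\sigma_k]\in\Hit_n(S)_{\Q}$ converging to $[\rho]$, represented by $\sigma_k\colon\pi_1(S)\to\SL(n,\Q)$. Because Hitchin representations are irreducible, the $\PGL(n,\R)$-action on the locus of irreducible representations is proper with closed orbits, so convergence in the quotient lifts to convergence on the nose: there exist $g_k\in\SL(n,\R)$ with $g_k\sigma_k g_k^{-1}\to\rho$ in $\Hom(\pi_1(S),\SL(n,\R))$.

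Next, I would rationalize the conjugators. Since $\SL(n,\Q)$ is dense in $\SL(n,\R)$, I can pick $h_k\in\SL(n,\Q)$ so close to $g_k$ that
\[
\max_{1\le i\le r}\bigl\|h_k\sigma_k(\gamma_i)h_k^{-1}-g_k\sigma_k(\gamma_i)g_k^{-1}\bigr\|<\tfrac{1}{k}.
\]
Then $\rho_k:=h_k\sigma_k h_k^{-1}$ takes values in $\SL(n,\Q)$, and by the triangle inequality $\rho_k(\gamma_i)\to\rho(\gamma_i)$ for each $i$, so $\rho_k\to\rho$ in $\Hom(\pi_1(S),\SL(n,\R))$.

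Finally, $\widetilde{\mathcal{H}}_n(S)$ is a connected component of a real algebraic set, hence open in $\Hom(\pi_1(S),\SL(n,\R))$, so $\rho_k\in\widetilde{\mathcal{H}}_n(S)_{\Q}$ for all large $k$. This gives the desired approximation. The only real subtlety — and what I would treat as the main technical point — is the lifting step, i.e.\ that convergence in $\Hit_n(S)$ can be upgraded to convergence of genuine representations after conjugation; once that is in hand, the rationalization of the conjugators is immediate from density of $\SL(n,\Q)$.
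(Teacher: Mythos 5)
Your argument is correct and is essentially the approach the paper has in mind: use Theorem \ref{maintheorem} to approximate $[\rho]$ by classes of $\SL(n,\Q)$-valued representations, lift to convergence of actual representations after conjugation (this is where irreducibility, or just openness of the quotient map, is used), and then perturb the conjugators into $\SL(n,\Q)$ via its density in $\SL(n,\R)$. One small simplification: the final step about openness of the component is unnecessary, since $\widetilde{\mathcal{H}}_n(S)$ is preserved by the conjugation action of the connected group $\SL(n,\R)$, so $\rho_k=h_k\sigma_k h_k^{-1}$ lies in it automatically.
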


\subsubsection*{Acknowledgements} 
Both authors thank Darren Long and Arnaud Maret for helpful discussions and feedback regarding this work. In particular, we are indebted to the latter for explaining to us \cite{GoldmanXia09}*{Lemma 3.2} and its implications and relation to forthcoming work with Julien March\'e and Maxime Wolff on representations into Hilbert modular groups. These ideas were key in the beginning of this work. Both authors also thank the anonymous referee for helpful feedback and the second author acknowledges the support of the Big Bang Theory Graduate Fellowship.

\section{Twist flows on the Hitchin component}
Our proof of Theorem \ref{maintheorem} utilizes a particular deformation of representations which we will use to construct rational approximations to an arbitrary Hitchin representation. The purpose of this section is to describe the nature of these deformations. Our treatment follows the one given in \cite{Goldman86}, where they are interpreted as generalizations of the twist flow of a hyperbolic surface. Our interest in these deformations is in the arithmetic control at the level of representations they provide. This is similar to the perspective taken in \cite{LT}, where these same deformations are used to perform deformations with substantial arithmetic control as well.

Let $\Tr:\SL(n,\R)\to\R$ be the trace and for any $\gamma\in\pi_1(S)$, let $\Tr_\gamma:\Hit_n(S)\to\R$ denote the function
\[
\Tr_\gamma([\rho])=\Tr(\rho(\gamma)).
\]
Define $F:\SL(n,\R)\to\mathfrak{sl}(n,\R)$ as
\[
F(A)=A-\frac{\Tr(A)}{n}I_n.
\]
For any non trivial $\gamma\in\pi_1(S)$ which is freely homotopic in $S$ to a nonseparating simple closed curve, let $S\backslash\gamma$ denote compact the surface one gets by deleting a regular open neighborhood of $\gamma$ from $S$. Recall that $\pi_1(S)$ is an HNN-extension of $\pi_1(S\backslash\gamma)$. Define a flow on $\Hom(\pi_1(S),\SL(n,\R))$ by setting
\[
\Xi_\gamma^t(\rho)(\alpha)=\begin{cases}
    \rho(\alpha) & \textrm{if }\alpha\in\pi_1(S\backslash\gamma)\\
    \exp(t F(\rho(\gamma)))\rho(\alpha) & \textrm{if }i(\alpha,\gamma)=+1
\end{cases}
\]
where $i$ denotes the algebraic intersection number. This does define a new representation of $\pi_1(S)$ because $\exp(tF(\rho(\gamma)))$ centralizes $\rho(\gamma)$. 

\begin{definition}
The resulting flow, $\Xi_{\gamma}^t$, on $\Hom(\pi_1(S),\SL(n,\R))$ is called the \textbf{generalized twist flow} about $\gamma$.
\end{definition}

The main result of this section is the following, which states that two Hitchin representations may be connected via a path which is a piecewise concatenation of twist flows of the above form.

\begin{lemma}
\label{bendconnection}
Let $\rho_1$ and $\rho_2$ be Hitchin representations. Then there exist nonseparating simple closed curves $\gamma_1,\ldots,\gamma_k$ and real numbers $t_1,\ldots,t_k$ so that the representation
\[
\rho_2':=\Xi_{\gamma_k}^{t_k}(\ldots(\Xi_{\gamma_1}^{t_1}(\rho_1))\ldots)
\]
is conjugate to $\rho_2$. In other words, $[\rho_2']=[\rho_2]$ on $\Hit_n(S)$.
\end{lemma}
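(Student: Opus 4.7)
The plan is to apply Sussmann's orbit theorem to the family of vector fields whose flows are $\{\Xi_\gamma^t\}$. Since $(\Xi_\gamma^t)^{-1} = \Xi_\gamma^{-t}$ is itself a twist flow, the reachable sets partition $\Hit_n(S)$ into orbits. If each orbit is open, then the connectedness of $\Hit_n(S)$ forces there to be a single orbit equal to all of $\Hit_n(S)$, which is exactly the conclusion of the lemma. Openness reduces, via the orbit theorem, to showing that at every $[\rho] \in \Hit_n(S)$ the infinitesimal generators $\dot{\Xi}_\gamma^0$, as $\gamma$ varies over all nonseparating simple closed curves, span $T_{[\rho]}\Hit_n(S)$. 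Before this, one observes that the flows preserve $\Hit_n(S)$ since it is a connected component of the representation variety and each flow is continuous in $t$.

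The next step is to identify the infinitesimal generators cohomologically. Using the standard identification $T_{[\rho]}\Hit_n(S) \cong H^1(\pi_1(S), \mathfrak{sl}(n,\R)_{\Ad\rho})$, differentiating $\Xi_\gamma^t(\rho)(\alpha)\rho(\alpha)^{-1}$ at $t=0$ yields the cocycle $u_\gamma$ that vanishes on $\pi_1(S \setminus \gamma)$ and sends any $\alpha$ with $i(\alpha, \gamma) = +1$ to $F(\rho(\gamma))$. By Goldman's formula relating twist deformations to trace functions, the class $[u_\gamma]$ is, up to a nonzero scalar, the Hamiltonian vector field of $\Tr_\gamma$ with respect to the Goldman symplectic form on the character variety.

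The main obstacle is then showing that the collection $\{[u_\gamma]\}$ spans $H^1(\pi_1(S), \mathfrak{sl}(n,\R)_{\Ad\rho})$. I would proceed dually: a cotangent vector annihilating every $[u_\gamma]$ would force $d\Tr_\gamma$ to vanish in that direction for every nonseparating simple closed curve $\gamma$ on $S$. The goal is to derive a contradiction from the fact that trace functions along nonseparating simple closed curves suffice to locally parameterize the Hitchin component; this is ultimately a statement about the richness of the mapping class group orbit of simple closed curves, together with the regularity of $\rho(\gamma)$ (ensured by the purely loxodromic nature of Hitchin representations). The genus assumption $g \geq 3$ is expected to enter precisely here, as one needs enough nonseparating simple closed curves in sufficiently generic position to produce a local coordinate system; in genus $2$ this becomes more delicate, which matches the restriction stated for the main theorem.

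Once the spanning property is established, orbits are open, connectedness of $\Hit_n(S)$ forces a single orbit, and reading off the concatenation of flows connecting $[\rho_1]$ to $[\rho_2]$ gives the curves $\gamma_1, \ldots, \gamma_k$ and times $t_1, \ldots, t_k$ required by the lemma.
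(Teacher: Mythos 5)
Your overall strategy matches the paper's: both reduce the lemma to showing that the Hamiltonian flows of the trace functions $\Tr_\gamma$ for nonseparating simple closed curves act transitively on $\Hit_n(S)$, derive transitivity from the spanning of $\{(d\Tr_\gamma)_{[\rho]}\}$ (equivalently, of the corresponding Hamiltonian vector fields) at every point, and then lift the resulting chain of flows $\xi_{\gamma_i}^{t_i}$ to the level of representations via Goldman's covering theorem relating $\Xi_\gamma^t$ to $\xi_\gamma^t$. Your cohomological description of the infinitesimal generator as the cocycle $u_\gamma$ supported on the HNN decomposition and your observation that $[u_\gamma]$ is Goldman-dual to $d\Tr_\gamma$ are both correct, and invoking Sussmann's orbit theorem is a valid (if slightly heavier than necessary) way to obtain transitivity from pointwise spanning on a connected manifold.

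However, there is a real gap at the step you yourself flag as ``the main obstacle.'' Your sketch for why the classes $[u_\gamma]$ span $H^1(\pi_1(S),\mathfrak{sl}(n,\R)_{\mathrm{Ad}\,\rho})$ is circular: you propose to ``derive a contradiction from the fact that trace functions along nonseparating simple closed curves suffice to locally parameterize the Hitchin component,'' but that claim is exactly the spanning statement you need to establish, not something you can assume. Appeals to the ``richness of the mapping class group orbit'' and the loxodromic regularity of $\rho(\gamma)$ gesture in the right direction but are not a proof; in particular, nothing in your sketch actually produces enough linearly independent differentials. The paper handles this step by citing the infinitesimal marked trace rigidity of Bridgeman--Canary--Labourie (\cite{BCL}*{Proposition 10.1}), which states precisely that the $(d\Tr_\gamma)_{[\rho]}$ for $\gamma$ nonseparating and simple generate $T^*_{[\rho]}\Hit_n(S)$, and whose proof indeed uses a configuration of simple closed curves available only when $g\geq 3$ --- which confirms your correct intuition about where the genus hypothesis enters, but which needs to be invoked explicitly rather than re-derived by a vague argument.
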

\begin{remark}
Analogs of this lemma in the context of other Lie groups have been known before (e.g. \cite{GoldmanXia09}*{Lemma 3.2} proves this for $\operatorname{SU}(2)$). To our knowledge, a proof in the context of the $\SL(n,\R)$-Hitchin component has never been recorded, so we include one here.
\end{remark}

To establish this result, we first exploit a connection between the flows $\Xi_\gamma^t$ and the underlying geometry of the Hitchin component. In \cite{GoldmanSymplecticForm}, Goldman defines a symplectic form on the the character variety of a surface group that gives $\Hit_n(S)$ the structure of a connected symplectic manifold. Denote by $\xi_\gamma^t$ the flow associated to the Hamiltonian vector field of the function $\Tr_\gamma$. This flow is related to our earlier flow $\Xi_\gamma^t$ via the following result.

\begin{theorem}[\cite{Goldman86}*{Theorem 4.7}]
\label{flowscover}
The flow $\Xi_\gamma^t$ on $\widetilde{\Hit}_n(S)$ covers the flow $\xi_\gamma^t$ on $\Hit_n(S)$. 
\end{theorem}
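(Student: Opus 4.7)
The strategy is to verify the covering relation infinitesimally: I would compute the tangent vector $\dot\rho:=\tfrac{d}{dt}\big|_{t=0}\Xi_\gamma^t(\rho)$ and show that it projects under $\widetilde{\Hit}_n(S)\to\Hit_n(S)$ to the Hamiltonian vector field of $\Tr_\gamma$ at $[\rho]$. Since each flow is determined by its infinitesimal generator, and both generators descend to the character variety (the first because conjugation respects the twist construction, the second by definition), this suffices.

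Differentiating the piecewise definition of $\Xi_\gamma^t$ at $t=0$ produces a map $u:\pi_1(S)\to\mathfrak{sl}(n,\R)$ that vanishes on $\pi_1(S\backslash\gamma)$ and sends the HNN stable letter to $F(\rho(\gamma))$. That $u$ is a cocycle valued in $\mathfrak{sl}(n,\R)_{\mathrm{Ad}\rho}$ follows from the fact that $\exp(tF(\rho(\gamma)))$ commutes with $\rho(\gamma)$ together with the bilinearity of the algebraic intersection pairing. Geometrically, the class $[u]\in H^1(\pi_1(S),\mathfrak{sl}(n,\R)_{\mathrm{Ad}\rho})$ is the Poincar\'e dual of the loop $\gamma$ weighted by $F(\rho(\gamma))$. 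Separately, differentiating $\Tr_\gamma$ along a deformation with $\dot\rho_s(\alpha)=v(\alpha)\rho(\alpha)$ yields
\[
d\Tr_\gamma([v])=\Tr(v(\gamma)\rho(\gamma)).
\]

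The remaining step is to invoke Goldman's description of his symplectic form as the cup product on $H^1(\pi_1(S),\mathfrak{sl}(n,\R)_{\mathrm{Ad}\rho})$, paired via the trace form on $\mathfrak{sl}(n,\R)$ and evaluated against the fundamental class of $S$. Using the standard calculation that this cup product against a class carried by a simple closed curve $\gamma$ reduces to the intersection number weighted by the trace form on coefficients, one obtains
\[
\omega([u],[v])=\Tr(F(\rho(\gamma))v(\gamma))=\Tr(\rho(\gamma)v(\gamma)),
\]
where the second equality uses that $v(\gamma)\in\mathfrak{sl}(n,\R)$ is traceless, so that the scalar correction in $F$ contributes nothing. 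Comparing with the formula for $d\Tr_\gamma$ then identifies $[u]$ as the Hamiltonian vector field of $\Tr_\gamma$ at $[\rho]$, which is exactly the claim.

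The main obstacle is the Poincar\'e-duality calculation identifying the cup-product pairing against a class supported on $\gamma$ with the intersection formula used in the last display; this is the technical heart of Goldman's original argument and is most cleanly executed by choosing a CW decomposition of $S$ in which $\gamma$ appears as a $1$-cell and writing explicit cocycle representatives. The subtraction of $\tfrac{\Tr(\rho(\gamma))}{n}I_n$ in the definition of $F$, which may at first look cosmetic, is precisely what keeps $u$ inside $\mathfrak{sl}(n,\R)$ so the cohomological framework applies without modification.
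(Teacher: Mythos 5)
Your argument is correct and is essentially a faithful reconstruction of Goldman's original proof of this result, which the paper does not reprove but simply imports as \cite{Goldman86}*{Theorem 4.7}: the infinitesimal generator of the twist is the Poincar\'e dual of $\gamma\otimes F(\rho(\gamma))$, and the cup-product description of the symplectic form identifies that class with the Hamiltonian vector field of $\Tr_\gamma$. You correctly flag the one genuinely technical step (the intersection-theoretic evaluation of the cup product against a class supported on $\gamma$) and correctly identify why $F$ must be the trace-free projection rather than $A$ itself.
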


Thus, to establish the result of Lemma \ref{bendconnection}, we analyze the action of the flows $\xi_\gamma^t$ on $\Hit_n(S)$. We do so via an application of the following theorem of Bridgeman, Canary and Labourie, which is regarded as a sort of ``infinitesimal marked trace rigidity'' for Hitchin representations.

\begin{theorem}[\cite{BCL}*{Proposition 10.1}]
\label{tracerigidity}
For any $[\rho]\in\Hit_n(S)$ the collection of differentials
\[
\{ (d\Tr_\gamma)_{[\rho]}\,:\, \gamma\textrm{ is a nonseparating simple closed curve}\}
\]
generates the cotangent space to $\Hit_n(S)$ at $[\rho]$.
\end{theorem}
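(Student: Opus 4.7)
\emph{Plan.} I would argue by contradiction: suppose the differentials $\{(d\Tr_\gamma)_{[\rho]}\}$ over nonseparating simple closed curves fail to span $T^*_{[\rho]}\Hit_n(S)$. By duality there exists a nonzero $v\in T_{[\rho]}\Hit_n(S)$, realizable by a smooth path $\rho_t$ with $\rho_0=\rho$, such that $\frac{d}{dt}\big|_{t=0}\Tr(\rho_t(\gamma))=0$ for every nonseparating simple closed curve $\gamma$. The aim is to conclude $v=0$, which gives the contradiction.

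The first step is to translate the vanishing of trace derivatives into vanishing of length derivatives. For a Hitchin representation $\rho$, the element $\rho(\gamma)$ is purely loxodromic with distinct positive eigenvalues $\lambda_1(\gamma)>\cdots>\lambda_n(\gamma)$, and the Hilbert length $\ell^H_\rho(\gamma):=\log\lambda_1(\rho(\gamma))$ is well-defined. Since $\lambda_1$ dominates the trace when $\gamma$ is long, a perturbative argument relates the first-order variation of $\Tr_\gamma$ along $v$ to the first-order variation of $\ell^H_\rho(\gamma)$ along $v$. This step extracts the vanishing of $d\ell^H_\rho(\gamma)(v)$ for all nonseparating simple closed curves $\gamma$.

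Next, I would extend this vanishing to all geodesic currents by density. Using Labourie's cross-ratio formalism, the Hilbert length function $\ell^H_\rho$ extends to a continuous, positively homogeneous function on the space of geodesic currents $\mathcal{C}(S)$, and so its derivative along $v$ extends to a continuous linear functional thereon. By Bonahon's density theorem, weighted nonseparating simple closed curves are dense in $\mathcal{C}(S)$, so the vanishing on such curves propagates to all of $\mathcal{C}(S)$, and hence to every element of $\pi_1(S)$. Finally, infinitesimal marked length spectrum rigidity --- encoded in the pressure metric machinery of Bridgeman--Canary--Labourie--Sambarino, which equips $\Hit_n(S)$ with a Riemannian metric detecting infinitesimal variations of the Hilbert length spectrum --- forces $v=0$, completing the argument.

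The hard part is the first step. Powers of a nonseparating simple closed curve are not themselves simple, so one cannot isolate $\log\lambda_1(\rho(\gamma))$ from the sequence $\Tr(\rho(\gamma^k))$ while remaining in the prescribed class of curves. Separating the variation of the top eigenvalue from the variations of the subdominant ones using only single-curve trace data is the technical heart of the argument in \cite{BCL} and is the step most in need of care.
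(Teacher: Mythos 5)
First, note that the paper does not prove this statement: it is imported verbatim as \cite{BCL}*{Proposition 10.1}, so the relevant comparison is with the argument of Bridgeman--Canary--Labourie. Measured against that, your outline has two genuine gaps, either of which is fatal as written. The first is the step you yourself flag as ``the hard part'': from $(d\Tr_\gamma)_{[\rho]}(v)=0$ one cannot extract $(d\ell^H_\gamma)_{[\rho]}(v)=0$, because the usual device of replacing $\gamma$ by $\gamma^k$ and letting the top eigenvalue dominate leaves the class of simple closed curves, and the hypothesis says nothing about non-simple elements. This is not a technicality to be deferred; it is where essentially all the content lies. The actual argument stays inside the simple class by exploiting configurations such as pairs of nonseparating simple closed curves $\gamma,\eta$ with $i(\gamma,\eta)=1$, for which $\gamma^n\eta$ is again simple for every $n$, and then analyzes the asymptotics of $\Tr(\rho(\gamma^n\eta))$ as $n\to\infty$ to separate the contributions of the individual eigenvalues and eigenflags of $\rho(\gamma)$. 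It is exactly the need for such configurations of mutually intersecting nonseparating simple closed curves that forces the hypothesis $g\geq 3$ --- which your sketch never uses, a warning sign given that the paper states explicitly that this theorem is the one place where $g\geq 3$ enters.

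The second gap is the density step, which is false as stated: by Bonahon, weighted \emph{simple} closed curves are dense in the space of measured laminations $\mathcal{ML}(S)$, not in the space of all geodesic currents $\mathcal{C}(S)$; it is the weighted closed curves with no simplicity restriction that are dense in $\mathcal{C}(S)$. So vanishing of $d\ell^H(\cdot)(v)$ on simple closed curves propagates only to $\mathcal{ML}(S)$, a closed nowhere-dense cone in $\mathcal{C}(S)$, and in particular never reaches the non-simple conjugacy classes needed to invoke nondegeneracy of the pressure metric. This failure is precisely why simple spectral rigidity is a separate and harder theorem than marked length spectrum rigidity rather than a corollary of it; any argument deriving the former from the latter by a density-of-simple-curves step must break down, and this is where. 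Your final step (pressure-metric nondegeneracy kills $v$ once \emph{all} length derivatives vanish) is sound, but the proposed route to that hypothesis does not exist.
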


It is worth noting that the use of this theorem in proving Lemma \ref{bendconnection} is the only step in our proof of Theorem \ref{maintheorem} where we need the fact that the genus of $S$ is at least $3$. More discussion on what is missing in the genus $2$ case is done in Section \ref{section:generalizations}.

\begin{proof}[Proof of Lemma \ref{bendconnection}]
Let $\mathfrak{G}$ denote the group generated by the flows $\xi_\gamma^t$ for all nonseparating simple closed curves $\gamma$ and all $t$. By Theorem \ref{tracerigidity}, this group acts transitively on $\Hit_n(S)$ (cf. \cite{GoldmanXia09}*{Lemma 3.2}). Thus, given $[\rho_1],[\rho_2]\in\Hit_n(S)$, there exist nonseparating simple closed curves, $\gamma_1,\ldots,\gamma_k$, and $t_1,\ldots,t_k\in\R$ so that 
\[
[\rho_2]=\xi_{\gamma_k}^{t_k}(\ldots(\xi_{\gamma_1}^{t_1}([\rho_1]))\ldots).
\]
Lemma \ref{bendconnection} then follows from the above equality viewed at the level of representations and Theorem \ref{flowscover}.
\end{proof}

\section{Proof of Theorem \ref{maintheorem}}
We now explain the steps to establishing our main result. The results of the previous section allow one to build approximations to Hitchin representations via twist flows which are essentially controlled by choices of matrices in certain centralizers. One may build rational approximations to these flows through an application of the following theorem to these matrix centralizes.

\begin{theorem}[\cite{platonovrapinchuk}*{Theorem 7.7}]
\label{weakapproximation}
For $G$ a connected algebraic group defined over $\Q$, $G(\Q)$ is dense in $G(\R)$ endowed with the Euclidean topology.
\end{theorem}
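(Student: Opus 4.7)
The plan is to reduce the theorem to the elementary density of $\Q^N$ in $\R^N$ by exploiting a classical structural result: every connected linear algebraic group over a perfect field, and in particular over $\Q$, is unirational. Concretely, unirationality produces a nonempty Zariski open subset $U\subseteq\mathbb{A}^N_{\Q}$ and a dominant $\Q$-morphism $f:U\to G$. Establishing this unirationality is the main algebraic-geometric input and is genuinely nontrivial in general: for a split semisimple group it is immediate from the Bruhat decomposition, since the big cell $U^- T U^+$ is a rational variety; for general $G$ one must additionally handle non-split tori using Galois descent. I would invoke this as a known structural theorem rather than reprove it.

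With $f$ in hand, the next step is to establish local density of $G(\Q)$ near some $\Q$-point. The locus $U':=\{x\in U\,:\,df_x \textrm{ is surjective}\}$ is Zariski open and nonempty because $f$ is dominant and $G$ is smooth, and its complement in $\mathbb{A}^N$ is a proper Zariski closed subset, so $U'(\Q)$ is both Zariski dense in $U'$ and Euclidean dense in $U'(\R)$. Picking any $p\in U'(\Q)$ and applying the implicit function theorem over $\R$, $f$ restricts to a real-analytic submersion on some Euclidean neighborhood $V$ of $p$ in $U'(\R)$, and hence is an open map onto a Euclidean open neighborhood $\Omega$ of $f(p)$ in $G(\R)$. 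Continuity of $f$ together with the density of $V\cap\Q^N$ in $V$ then yields that $f(V\cap\Q^N)\subseteq\Omega\cap G(\Q)$ is dense in $\Omega$, so $G(\Q)$ is Euclidean dense in a neighborhood of the $\Q$-point $f(p)$.

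To globalize, I would use the group law. For any $q\in G(\Q)$, left translation by $qf(p)^{-1}\in G(\Q)$ is a homeomorphism of $G(\R)$ preserving $G(\Q)$ and carrying the neighborhood $\Omega$ of $f(p)$ to a neighborhood of $q$; thus $G(\Q)$ is Euclidean dense in a neighborhood of every $\Q$-point. The Euclidean closure $H:=\overline{G(\Q)}$ is therefore an open (hence closed) subgroup of $G(\R)$ containing the identity component $G(\R)^\circ$. Combining this with the Zariski density of $G(\Q)$ in $G$ (an immediate consequence of unirationality, since $f(U(\Q))$ is Zariski dense), a standard argument shows that the open-closed subgroup $H$ surjects onto the finite component group $\pi_0(G(\R))$, so $H=G(\R)$.

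The principal obstacle is the unirationality input, which in full generality truly requires algebraic group theory, notably the structure theory of tori. For the concrete groups appearing in this paper, namely $\SL(n)$, $\Sp(2k)$, and $\mathrm{G}_2$, unirationality is transparent from the explicit root-subgroup parametrizations, so the substantive algebraic geometry essentially evaporates and what remains is the real-analytic density argument sketched above.
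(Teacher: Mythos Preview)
The paper does not prove this theorem; it is quoted from Platonov--Rapinchuk as a black-box input and invoked only through the corollary on centralizers that immediately follows it. Your outline via unirationality of connected groups over perfect fields, followed by the implicit function theorem and a translation argument, is precisely the standard route taken in that reference, so there is no independent argument in the paper to compare against.

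One point in your sketch deserves a flag: the final step, promoting $H\supseteq G(\R)^\circ$ to $H=G(\R)$, does not follow from Zariski density of $G(\Q)$ alone. The Euclidean connected components of $G(\R)$ are not Zariski closed, so a Zariski-dense subset can in principle lie in a proper union of components; the ``standard argument'' you invoke needs genuine content. The clean fix is to choose the unirational parametrization so that its image on $\R$-points is already Euclidean dense rather than merely open. For split $G$ this is immediate from the construction you mention: the big Bruhat cell $U^{-}TU^{+}$ is Zariski open in $G$, hence its real locus is Euclidean dense in $G(\R)$ and meets every component, and each factor $U^{\pm}\cong\mathbb{A}^m$, $T\cong\mathbb{G}_m^r$ has $\Q$-points Euclidean dense in its $\R$-points. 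For non-split tori one instead uses the norm map from a Weil restriction and checks surjectivity on $\R$-points. As you correctly observe at the end, for the groups actually used in this paper---$\SL_n$, $\Sp_{2k}$, $\mathrm{G}_2$, and the maximal tori arising as centralizers of regular elements---the split big-cell argument applies directly and this subtlety disappears.
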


For $A\in\SL(n,\Q)$, let $Z_A$ denote the centralizer of $A$ in the algebraic group $\SL_n$. It is an algebraic group defined over $\Q$ and for any subfield $k$ of $\C$, $Z_A(k)$ is the set of matrices in $\SL(n,k)$ that commutes with $A$. If $A\in\SL(n,\Q)$ has distinct eigenvalues then $Z_A$ is a connected algebraic group. Indeed, we have $Z_A(\C)\cong (\C^{*})^{n-1}$.

\begin{corollary}
\label{corollarydensity}
    If $A\in\SL(n,\Q)$ has distinct eigenvalues then $Z_A(\Q)$ is dense in $Z_A(\R)$.
\end{corollary}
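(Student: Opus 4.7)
The plan is to apply Theorem \ref{weakapproximation} directly to $Z_A$. The two hypotheses required — that $Z_A$ is a connected algebraic group defined over $\Q$ — have both been essentially recorded (up to a brief justification) in the paragraph preceding the corollary, so the entire proof reduces to pointing this out and citing the theorem.

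To spell out the verifications: $Z_A$ is defined over $\Q$ because it is the subvariety of $\SL_n$ cut out by the equations $XA - AX = 0$, whose coefficients are the rational entries of $A$. Connectedness is exactly where the distinct-eigenvalue hypothesis enters: fixing an eigenbasis of $A$ (over $\C$), the matrices commuting with $A$ are precisely the diagonal matrices in that basis, so $Z_A(\C) \cong (\C^*)^{n-1}$ is an algebraic torus, in particular connected. With both hypotheses in hand, Theorem \ref{weakapproximation} immediately delivers the density of $Z_A(\Q)$ in $Z_A(\R)$.

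The main obstacle is essentially nil, since the nontrivial content lives in Theorem \ref{weakapproximation}, the general weak-approximation result for connected $\Q$-groups. What is perhaps worth emphasizing is why the distinct-eigenvalue assumption appears: it is precisely what guarantees that $Z_A$ is connected, so that Theorem \ref{weakapproximation} can be applied to $Z_A$ itself rather than only to its identity component. For matrices with repeated eigenvalues one would need to treat the component group separately, but that case is not needed for the application to twist flows.
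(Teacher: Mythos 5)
Your proof is correct and matches the paper's argument exactly: the paper records in the paragraph preceding the corollary that $Z_A$ is a $\Q$-group with $Z_A(\C)\cong(\C^*)^{n-1}$ (hence connected), and then the corollary is an immediate application of Theorem \ref{weakapproximation}. You simply spell out the justifications a bit more fully, which is fine.
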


This corollary will allow us to approximate the matrices controlling the twist deformations by rational ones, which we can then leverage for extra control on the arithmetic of the individual representations. Next, we show that $\mathcal{H}_n(S)_{\Q}$ is nonempty.

\begin{lemma}
\label{lemmaexistenceofQpoints}
For every $n\geq 2$, there exists a Hitchin representation with image in $\SL(n,\Q)$.
\end{lemma}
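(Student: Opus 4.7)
The plan is to exhibit an explicit Hitchin representation with image in $\SL(n,\Q)$ by combining a rational Fuchsian representation with the symmetric power embedding $\SL(2,\R)\hookrightarrow\SL(n,\R)$, using the fact that the Fuchsian locus lies inside the Hitchin component.

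First, I would invoke Takeuchi's theorem (mentioned just after Theorem \ref{maintheorem} in the excerpt), which in particular provides the existence of a Fuchsian representation $\rho_0:\pi_1(S)\to\SL(2,\R)$ whose image lies in $\SL(2,\Q)$. (Equivalently, one may quote any explicit construction of an arithmetic Fuchsian group uniformizing some surface of the correct genus, e.g.\ by choosing an indefinite rational quaternion algebra whose unit group yields a surface group.) This step is the only place where existence of some rational point is used; it is classical.

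Second, let $\iota_n:\SL(2,\R)\to\SL(n,\R)$ denote the irreducible representation realized as the action on the space of homogeneous polynomials of degree $n-1$ in two variables, expressed in the monomial basis $\{x^{n-1},x^{n-2}y,\ldots,y^{n-1}\}$. In this basis the matrix coefficients of $\iota_n(A)$ are polynomial expressions in the entries of $A$ with integer coefficients, so $\iota_n$ is defined over $\Z$; in particular $\iota_n(\SL(2,\Q))\subset\SL(n,\Q)$ (the image lies in $\SL$ because the weights $n-1,n-3,\dots,-(n-1)$ sum to zero). Hitchin's original parametrization identifies $\iota_n\circ\rho_0$ as a representative of the Fuchsian locus, which by definition lies in $\Hit_n(S)$ (choosing, when $n$ is even, the convention used elsewhere in the paper to fix one of the two components).

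Putting these two steps together, $\iota_n\circ\rho_0:\pi_1(S)\to\SL(n,\Q)$ is the desired Hitchin representation. The only potentially subtle point is making sure the composition actually lands in $\Hit_n(S)$ rather than some other component, but this is handled by the standard fact that the Fuchsian locus sits inside the Hitchin component; no nontrivial dynamical or arithmetic input beyond Takeuchi's theorem is needed.
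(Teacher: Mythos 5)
Your proposal is correct and follows essentially the same approach as the paper: both find a Fuchsian representation with image in $\SL(2,\Q)$ (the paper cites Takeuchi, Vinberg, and gives an explicit Long--Reid/Magnus example) and then compose with the irreducible embedding $\SL(2,\Q)\hookrightarrow\SL(n,\Q)$ to land in the Fuchsian locus of the Hitchin component. The only difference is presentational: you spell out why $\iota_n$ is defined over $\Z$, while the paper emphasizes explicit genus-$2$ examples that restrict to all higher genera.
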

\begin{proof}
Observe that there exist representations inside the Teichm\"uller space of $S$ with image contained in $\SL(2,\Q)$. There are a number of constructions of such examples, such as ones due to Vinberg in \cite{vinberg_someexamplesSL2Q}, or from Takeuchi's result in \cite{takeuchi}. One might also consider the following example originally due to Long and Reid, based on work of Magnus in \cite{magnus}:
\[ 
\rho_0(a)=\begin{pmatrix} 3 & \frac{2}{3}
                        \\ 0 & \frac{1}{3}
                        \end{pmatrix}
\quad
\textrm{and}
\quad
\rho_0(b)=\begin{pmatrix} 0 & -2\\
                \frac{1}{2} & \frac{83}{8}
                \end{pmatrix}.
\] 
This is a discrete and faithful representation of the group $\Gamma=\langle a,b\,|\,[a,b]^2\rangle$ into $\SL(2,\Q)$. $\Gamma$ contains an index $4$ subgroup isomorphic to the fundamental group of a genus $2$ surface, hence contains finite-index surface subgroups of every genus. The restriction of $\rho_0$ to one such subgroup isomorphic to $\pi_1(S)$ gives a representation in the Teichm\"uller space of $S$ with image in $\SL(2,\Q)$. Using the irreducible embedding of $\SL(2,\Q)$ into $\SL(n,\Q)$, the conclusion holds for every $n$.
\end{proof}

We can now prove the main result.

\begin{proof}[Proof of Theorem \ref{maintheorem}]
Fix a representation $\rho_0:\pi_1(S)\to\SL(n,\Q)$ coming from Lemma \ref{lemmaexistenceofQpoints} and for each $k\geq 1$, set
\[
\Hit_n^k(S)=\left\{[\rho]\in\Hit_n(S)\,:\, 
\begin{tabular}{c}
there exist simple, nonseparating $\gamma_1,\ldots,\gamma_k\in\pi_1(S)$\\
and $t_1,\ldots,t_k\in\R$ so that $[\rho]=\xi_{\gamma_k}^{t_k}(\ldots(\xi_{\gamma_1}^{t_1}([\rho_0]))\ldots)$
\end{tabular}
\right\}.
\]
By Lemma \ref{bendconnection}, $\Hit_n(S)=\bigcup_{k\geq 1}\Hit_n^k(S)$, thus to show the result, it suffices to show that the closure of $\Hit_n(S)_\Q$ contains $\Hit_n^k(S)$ for all $k$.

For any $[\rho]\in\Hit_n^1(S)$, $\rho$ is conjugate to $\Xi_{\gamma_1}^{t_1}(\rho_0)$ for some nonseparating $\gamma_1$ and $t_1\in\R$. By Corollary \ref{corollarydensity}, we may take a sequence $\{B_j\}_j$ of elements of $Z_{\rho_0(\gamma_1)}(\Q)$ converging to $\exp(t_1F(\rho_0(\gamma_1)))$. Since $\pi_1(S)$ is an HNN-extension of $\pi_1(S\backslash\gamma_1)$, we may define a sequence of representations $\{\rho_j:\pi_1(S)\to\SL(n,\Q)\}_j$ by
\[
\rho_j(\alpha)=\begin{cases}
\rho_0(\alpha) & \textrm{if }\alpha\in\pi_1(S\backslash\gamma_1)\\
B_j\rho_0(\alpha) & \textrm{if }i(\alpha,\gamma_k)=+1.
\end{cases}
\]
The sequence $\{\rho_j\}_j$ converges to $\Xi_{\gamma_1}^{t_1}(\rho_0)$, hence the closure of $\Hit_n(S)_\Q$ contains $\Hit_n^1(S)$.

Now, suppose that the closure of $\mathcal{H}_n(S)_{\Q}$ contains $\mathcal{H}^{k-1}_n(S)$ for some $k\geq2$ and take $[\rho]\in\Hit_n^{k}(S)$. Then there exists a nonseparating $\gamma_k$ and $t_k\in\R$ so that $\rho$ is conjugate to $\Xi_{\gamma_k}^{t_k}(\sigma)$ for some $[\sigma]\in\Hit_n^{k-1}(S)$. Since $[\sigma]\in\Hit_n^{k-1}(S)$, we may let $\{\sigma_i:\pi_1(S)\to\SL(n,\Q)\}_i$ be a sequence of Hitchin representations with $[\sigma_i]\to[\sigma]$. By applying a conjugation by elements of $\SL(n,\Q)$, we may further assume that $\sigma_i\to\sigma$. 

For each $i$, let $\{B_{i,j}\}_j$ be a sequence in $Z_{\sigma_i(\gamma_k)}(\Q)$ converging to $\exp(t_k F(\sigma_i(\gamma_k)))$ in $Z_{\sigma_i(\gamma_k)}(\R)$, as given by Corollary \ref{corollarydensity}. Fix a distance $d$ on $\SL(n,\R)$ inducing its usual topology and for each $m\geq 1$, let $\phi(m)$ denote the smallest $i$ such that
\[
d(\exp(t_k F(\sigma_i(\gamma_k))),\exp(t_k F(\sigma(\gamma_k))))<\frac{1}{m}.
\]
Similarly, define $\psi(m)$ as the smallest $j$ such that
\[
d(B_{\phi(m),j},\exp(t_k F(\sigma_{\phi(m)}(\gamma_k))))<\frac{1}{m}.
\]
By construction of $\phi$ and $\psi$, $B_{\phi(m),\psi(m)}$ converges to $\exp(t_k F(\sigma(\gamma_k)))$ as $m\to\infty$. We then define a new sequence of representations $\{\rho_m:\pi_1(S)\to\SL(n,\R)\}_m$ by noting that $\pi_1(S)$ is an HNN-extension of $\pi_1(S\backslash\gamma_k)$ and setting
\[
\rho_m(\alpha)=\begin{cases}
    \sigma_{\phi(m)}(\alpha) & \textrm{if }\alpha\in\pi_1(S\backslash\gamma_k)\\
    B_{\phi(m),\psi(m)}\sigma_{\phi(m)}(\alpha) & \textrm{if }i(\alpha,\gamma_k)=+1.
\end{cases}
\]
By construction, $\rho_m(\pi_1(S))\leqslant\SL(n,\Q)$ for all $m$. As $\sigma_{\phi(m)}\to\sigma$ and $B_{\phi(m),\psi(m)}\to\exp(t_k F(\sigma(\gamma_k)))$, we see that $\rho_m\to\Xi_{\gamma_k}^{t_k}(\sigma)$ as $m\to\infty$. In particular, $\{[\rho_m]\}_m$ is a sequence in $\Hit_n(S)_\Q$ converging to $[\rho]$, so that the closure of $\Hit_n(S)_\Q$ contains $\Hit_n^k(S)$.
\end{proof}

\section{Generalizations of Theorem \ref{maintheorem}}
\label{section:generalizations}
We now discuss possible generalizations of Theorem \ref{maintheorem}.

\subsection{Changing the target group}

Let $F$ be a field. We fix a non-degenerated alternating bilinear form on $F^{2k}$ and we denote by $\Sp(2k,F)$ the subgroup of $\SL(2k,F)$ that preserve this form. Up to conjugation, this is independant of the chosen bilinear form. We also denote by $\textrm{G}_2(F)$ the subgroup of $\SL(7,F)$ defined in Definition 3.6 in \cite{audibert2022zariskidense}. The real Lie groups $\Sp(2k,\R)$ and $\textrm{G}_2(\R)$ are split.

Given a split linear real Lie group $G$, the character variety $$\textrm{Hom}(\pi_1(S),G)/G$$ admits a smooth connected component called the \emph{Hitchin component} of $G$ \cite{Hitchin_LiegroupsandTeichmullerspace}. We denote it by $\Hit_G(S)$.
For $G=\Sp(2k,\R)$ (resp. $G=\textrm{G}_2(\R)$), the Hitchin component of $G$ embeds into $\mathcal{H}_{2k}(S)$ via the natural inclusion $G\hookrightarrow\SL(2k,\R)$ (resp. $\Hit_7(S)$ via $\operatorname{G}_2(\R)\hookrightarrow\SL(7,\R)$) \cite{sambarino2023infinitesimalzariskiclosurespositive}.

\begin{theorem}
\label{GHitchindense}
    When the genus of $S$ is at least $3$, the set of equivalence classes of Hitchin representations with image in $\Sp(2k,\Q)$ \emph{(}resp. {$\operatorname{G}_2(\Q)$}\emph{)} is dense in $\Hit_{\Sp(2k,\R)}(S)$ \emph{(}resp. {$\Hit_{\operatorname{G}_2(\R)}(S)$}\emph{)}.
\end{theorem}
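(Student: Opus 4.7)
The plan is to transport the proof of Theorem \ref{maintheorem} to the $G$-setting, with $G \in \{\Sp(2k,\R), \mathrm{G}_2(\R)\}$, by replacing each of the four main ingredients (twist flow, bending lemma, weak approximation, rational base point) with its $G$-analog.

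First, one defines a $G$-valued twist flow. For a regular semisimple $A \in G$, one constructs an $\mathrm{Ad}$-equivariant map $F_G : G \to \mathfrak{g}$ whose value at $A$ lies in the centralizer of $A$ in $\mathfrak{g}$; for example, $F_{\Sp}(A) = \tfrac{1}{2}(A - A^{-1})$ takes values in $\mathfrak{sp}(2k)$ and commutes with $A$, and an analogous formula works inside $\mathrm{G}_2$. Then $\exp(t F_G(A)) \in G$ centralizes $A$, so the HNN construction of Section 2 produces a flow $\Xi_\gamma^{G,t}$ on $\Hom(\pi_1(S), G)$ preserving the component lying above $\Hit_G(S)$, and Goldman's general framework in \cite{Goldman86} shows that this flow covers the Hamiltonian flow of $\Tr_\gamma$ on $\Hit_G(S)$ with respect to the $G$-Goldman symplectic form.

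Second, one upgrades Lemma \ref{bendconnection} to $G$. The embedding $\Hit_G(S) \hookrightarrow \Hit_n(S)$ induces a surjection $T^*_{[\rho]}\Hit_n(S) \twoheadrightarrow T^*_{[\rho]}\Hit_G(S)$ sending $d\Tr_\gamma$ to $d(\Tr_\gamma|_{\Hit_G(S)})$, so Theorem \ref{tracerigidity} immediately implies that the restricted differentials span $T^*_{[\rho]}\Hit_G(S)$. Non-degeneracy of the $G$-Goldman form then gives that the corresponding Hamiltonian vector fields span $T_{[\rho]}\Hit_G(S)$, and the same connectedness argument as in the proof of Lemma \ref{bendconnection} shows that any two $G$-Hitchin representations are joined by a finite concatenation of $G$-twist flows along nonseparating simple closed curves.

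Finally, one runs the arithmetic argument with $G$ in place of $\SL_n$. For a regular semisimple $A \in G(\Q)$, the simple-connectedness of $\Sp(2k)$ and $\mathrm{G}_2$ implies that $Z_A^G$ is a maximal $\Q$-torus, hence a connected $\Q$-algebraic group, so Theorem \ref{weakapproximation} yields that $Z_A^G(\Q)$ is dense in $Z_A^G(\R)$. For the rational base point, one composes the Fuchsian representation from Lemma \ref{lemmaexistenceofQpoints} with the principal embedding $\SL_2 \hookrightarrow G$, which is defined over $\Q$, to obtain a Hitchin representation in the Fuchsian locus of $\Hit_G(S)$ taking values in $G(\Q)$. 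The inductive HNN approximation from the proof of Theorem \ref{maintheorem}, now with the approximating sequence chosen inside $Z_A^G(\Q)$ rather than $Z_A^{\SL_n}(\Q)$, then builds rational approximants inside $G(\Q)$. The main obstacle is the first step: explicitly producing $F_G$ and verifying in Goldman's framework that the resulting $G$-twist flow covers the Hamiltonian flow of $\Tr_\gamma$ on $\Hit_G(S)$.
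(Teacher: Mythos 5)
Your argument is correct and follows the paper's proof essentially ingredient for ingredient: the restriction of the trace differentials, the connectedness of regular-semisimple centralizers in the simply connected groups $\Sp_{2k}$ and $\mathrm{G}_2$ (the paper cites Springer--Steinberg \S3.9 for exactly this), and the principal $\SL_2 \hookrightarrow G$ over $\Q$ applied to a rational Fuchsian point. The one place you deviate is in flagging the construction of $F_G$ and the covering property as ``the main obstacle'': this is not actually an obstacle, because Goldman's Theorem 4.7 in \cite{Goldman86} is proved for an arbitrary Lie group $G$ with an invariant function $f$ and its associated variation function $F_G\colon G\to\mathfrak{g}$, so the $G$-twist flow and its covering of the Hamiltonian flow of $\Tr_\gamma$ on $\Hit_G(S)$ come for free with no new computation needed. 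Your proposed $F_{\Sp}(A) = \tfrac{1}{2}(A - A^{-1})$ is in fact the correct variation function (the trace-form projection of $A$ onto $\mathfrak{sp}$), but writing it down is unnecessary; the paper simply invokes Goldman's general theory and notes that the resulting twisting element lies in the $G$-centralizer of $\rho(\gamma)$, which is all that the weak-approximation step requires.
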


\begin{proof}
The proof of Theorem \ref{maintheorem} applies without change. Let $G=\Sp(2k,\R)$ and $n=2k$ or $G=\operatorname{G}_2(\R)$ and $n=7$. The Hitchin component of $G$ is a smooth submanifold of $\Hit_n(S)$.\footnote{It follows from the fact that any representation in $G$ which is reductive and which has finite centralizer is a smooth point of the character variety \cite{GoldmanSymplecticForm}; this is the case for every Hitchin representation \cite{sambarino2023infinitesimalzariskiclosurespositive}.} For any $\gamma\in\pi_1(S)$, denote by $\Tr^G_{\gamma}$ the restriction of $\Tr_{\gamma}$ to $\Hit_G(S)$. At any $[\rho]\in\Hit_G(S)$,
\[
\{(d\Tr^G_{\gamma})_{[\rho]}:\gamma\ \textrm{is a nonseparating simple closed curve}\}
\]
generates $T^*_{[\rho]}\Hit_G(S)$. Indeed, given $v\in T_{[\rho]}\Hit_G(S)$ such that $(d\Tr^G_{\gamma})_{[\rho]}(v)=0$ for all nonseparating simple closed curves $\gamma$, viewing $v$ as an element of $T_{[\rho]}\Hit_n(S)$, we have that $(d\Tr_{\gamma})_{[\rho]}(v)=0$ for all such $\gamma$, hence by Theorem \ref{tracerigidity}, $v=0$. As in the proof of Lemma \ref{bendconnection}, it follows that any point in $\Hit_G(S)$ is the image of any other point under successive application of twist flows along nonseparating simple closed curves.

For $A\in\Sp(2k,\Q)$ (resp. $\operatorname{G}_2(\Q)$), let $Z_A$ denote the centralizer of $A$ in the algebraic group $\Sp_{2k}$ (resp. $\operatorname{G}_2$). When $A$ has distinct eigenvalues (such as when $A=\rho(\gamma)$ for $\rho$ Hitchin), $Z_A(\Q)$ remains dense in $Z_A(\R)$. This follows from Theorem \ref{weakapproximation}, where $Z_A$ is a connected algebraic group by \cite{springersteinberg}*{\S3.9}.

The last thing to check is that there exist Hitchin representations into $\Sp(2n,\Q)$ (resp. $\operatorname{G}_2(\Q)$). This follows from the fact that the irreducible representation of $\SL(2,\R)$ into $\SL(n,\R)$ for $n=2k$ (resp. $n=7$) maps $\SL(2,\Q)$ into $\Sp(2k,\Q)$ (resp. $\operatorname{G}_2(\Q)$) since it can be written using only polynomials with coefficients in $\Z$. Composing it with a representation in the Teichmüller space of $S$ with image in $\SL(2,\Q)$ yields a Hitchin representation in $\Sp(2k,\Q)$ (resp. $\operatorname{G}_2(\Q)$). The proof now strictly follows the one of Theorem \ref{maintheorem}.
\end{proof}

Theorem \ref{GHitchindense} can also be extended to other $\Q$-groups, such as ones whose $\R$-points are isomorphic to $\SO(k+1,k)$ when $n=2k+1$, provided these groups contain the image of a Hitchin representation. It turns out that some of these groups do not contain the image of any Hitchin representations which factor through a principal embedding of $\SL(2,\R)$ (see \cite{audibert2022zariskidense} for the classification of $\Q$-groups which contain rational Hitchin representations along the Fuchsian locus). Thus establishing the existence of one such representation is not immediate in these cases.

\subsection{Further questions}
Many of the methods used in establishing these results apply in more general contexts than just the Hitchin component. Let $G$ be a reductive algebraic group defined over $\Q$, and $\Hom(\pi_1(S),G(\R))/ G(\R)$ the $G(\R)$-character variety of $S$. In general, this space is highly singular, but there is a Zariski-open subset $\Omega\subset\Hom(\pi_1(S),G(\R))$ so that $\Omega/G(\R)$ is a smooth manifold. Goldman's symplectic form in \cite{GoldmanSymplecticForm} is still defined on $\Omega/G(\R)$, giving it the structure of a symplectic manifold. Moreover, invariant functions associated to simple closed curves still induce flows which admit concrete descriptions in terms of HNN-extensions or free products with amalgamation as before \cite{Goldman86}, allowing one to perform deformations of representations in a manner that makes their arithmetic properties transparent. In light of these facts being key in the proof of Theorem \ref{maintheorem}, it is natural to ask to what extent our methods might establish similar results outside the context of the Hitchin component.

More specifically, let $X\subset\Omega/G(\R)$ be a connected component and denote by $X_\Q$ the set of representations in $X$ conjugate into $G(\Q)$. Is it, in fact, the case that $X_\Q$ is dense in $X$? The following two questions are necessary considerations in extending the methods of this work to a more general context.

\begin{question}
\label{question:nonempty}
Do there exist representations in $X_\Q$?
\end{question}

The first obstacle requires establishing the existence of an initial representation to start the deformation process. Though perhaps simpler of the two questions, this one can be surprisingly subtle. For instance, one may adopt constructions in \cite{goldmancomponents} to show that representations of closed surface groups into $\operatorname{PSL}(2,\Q)$ of every Euler number exist, though as the remarks of the previous section indicate, for arbitrary $G$, there may be no representations of a closed surface group into $G(\Q)$ which factor through an embedding of $\operatorname{PSL}(2,\R)$ into $G(\R)$. Thus more understanding for when $X_\Q$ is empty or not, and generalizations of the constructions used in Lemma \ref{lemmaexistenceofQpoints}, are needed.

The second missing condition is an infinitesimal one. To any conjugation invariant, differentiable $f:G(\R)\to\R$ and $\gamma\in\pi_1(S)$, one can form the function $f_\gamma:X\to\R$ by taking $f_\gamma([\rho])=f(\rho(\gamma))$. If $\mathcal{S}\subset\pi_1(S)$ denotes the collection of elements corresponding to simple closed curves on $S$, then one needs to consider the following.

\begin{question}
\label{question:infinitesimalrigidity}
Is there a collection of conjugation invariant functions $\mathcal{F}=\{f:G(\R)\to\R\}$ so that the differentials
\[
\{d f_\gamma\,:\, f\in\mathcal{F},\gamma\in\mathcal{S}\}
\]
span the cotangent space $T_{[\rho]}^*X$ at every $[\rho]\in X$?
\end{question}

Theorem \ref{tracerigidity} shows that when $X=\Hit_n(S)$ and the genus of $S$ is $3$ or more, then $\mathcal{F}=\{\Tr\}$ suffices, but the proof of this result in \cite{BCL} relies on a certain configuration of simple closed curves which can only exist when the genus of $S$ is at least $3$. We still expect that the result for the Hitchin component is true in genus $2$, noting that one is allowed to consider more general classes of functions than just the trace.

Question \ref{question:infinitesimalrigidity} thus asks to what extent the infinitesimal simple closed curve rigidity of Theorem \ref{tracerigidity} might hold in the whole character variety. Work of March\'e and Wolff shows that the analogous \textit{non-infinitesimal} rigidity statement can fail in the $\operatorname{PSL}(2,\R)$-character variety outside of Teichm\"uller space, but do not rule out whether the infinitesimal version might still hold \cite{marchewolff}. Goldman and Xia's work in \cite{GoldmanXia09} also answers this question positively for the whole $\operatorname{SU}(2)$-character variety, where doing so is a key step in establishing the ergodicity of the action of the mapping class group, illustrating how this question can arise in a number of natural contexts even outside the scope of this work.

Nonetheless, an affirmative answer to Question \ref{question:infinitesimalrigidity} for $X$ implies that the Hamiltonian flows associated to the functions $f_\gamma$ will still act transitively on $X$. By virtue of the $\gamma$ coming from simple closed curves, these flows still admit descriptions as generalized twist flows at the level of representations, allowing one to connect any two representations by a finite sequence of these deformations. Connecting an arbitrary such representation to one provided by a positive answer to Question \ref{question:nonempty}, along with further understanding of the arithmetic of the centralizers associated to simple closed curves indicate possible means one may generalize the work here to a broader context.
\bibliographystyle{amsalpha}
\bibliography{references}
\end{document}